\newcommand{\E}{\mathbb{E}}
\newcommand{\R}{\mathbb{R}}
\newcommand{\N}{\mathbb{N}}
\newcommand{\bd}{\operatorname{bd}}
\renewcommand{\phi}{\varphi}
\def\co#1{{\text{\normalfont conv}}[#1]}
\newtheorem{lemma}{Lemma}
\newtheorem{prop}{Proposition}
\newtheorem{theorem}{Theorem}
\titleformat{\section}{\normalfont\bfseries}{\thesection}{1em}{\uppercase}
\begin{document} 
\begin{center} \LARGE 
Monotonicity of the Sample Range of 3-D Data:\\ 
Moments of Volumes of Random Tetrahedra
\normalfont\normalsize

\vspace{1cm}

Stefan Kunis, Benjamin Reichenwallner and Matthias Reitzner 

\vspace{0.1cm}

\textit{University of Osnabrueck and University of Salzburg}

\vspace{0.2cm}

\rule{5cm}{0.01cm}

\end{center}

\abstract{The sample range of uniform random points $X_1, \dots , X_n$ chosen in a given convex set 
is the convex hull $\co{X_1, \dots, X_n}$. It is shown that in dimension three the expected 
volume of the sample range is not monotone with respect to set inclusion. This 
answers a question by Meckes in the negative.

The given counterexample is the three-dimensional tetrahedron together with an infinitesimal variation of it.
As side result we obtain an explicit formula for all even moments of the volume of a random simplex 
which is the convex hull of three uniform random points in the tetrahedron and the center of one 
facet.
}

\medskip\noindent
{\footnotesize 2000 AMS subject classification: Primary 62H11; Secondary 52A22, 60D05.}

\medskip\noindent
{\footnotesize Keywords: sample range; random convex hull; extreme points; random simplex.}\bigskip

\section{Introduction}
Choose random points $X_1, \dots, X_n$ independently according to the uniform distribution in an 
interval $I \subset \R$. The convex hull $\co{X_1, \dots , X_n}$ of these random points is the 
well-known sample range which can also be defined as the interval $[X_{[1]}, X_{[n]}]$, where 
$X_{[1]} \leq \dots \leq X_{[n]}$ is the the order statistic of the random points, and the 
endpoints $X_{[1]},  X_{[n]}$ are the extreme points of the random sample. It is trivial and 
immediate that the expected 
length of the sample range is a monotone function in $I$: Choosing uniform random points $Y_1, 
\dots, Y_n$ in an interval $J \subset I$, one has 
\begin{equation}\label{eq:mon-d=1}
 \E |\co{Y_1, \dots, Y_n}| \leq  \E |\co{X_1, \dots, X_n}|,  
\end{equation}
where $|A|$ is the Lebesgue measure of the set $A$.

A generalization of this question to higher dimensions leads to nontrivial problems: First the 
definition of sample range, order statistic and extreme points is not obvious. Maybe the most 
natural extension of {\it extreme points} and {\it sample range} for higher dimensions is the 
following:

\begin{quote}
For random points $X_1, \dots , X_n \in \R^d$, we define the sample range to be the convex hull 
$\co{X_1, \dots, X_n}$, and the extreme points of the sample are those on the boundary of the 
sample 
range, i.e., the vertices of $\co{X_1, \dots, X_n}$. 
\end{quote}

The question we want to adress in this paper is the following: {\it Is the expected volume of the 
sample range a monotone function in the underlying distribution?} To make this question more 
precise, we assume (as the most simple example) that the points are chosen according to the uniform 
measure in a convex set $K$. Then the monotonicity question \eqref{eq:mon-d=1} reads as follows:

\noindent
Assume that $L, K$ are two $d$-dimensional convex  sets. Choose independent uniform random points 
$Y_1, \dots, Y_n$ in $L$ and  
$X_1, \dots , X_n$ in $K$. Is it true that $L \subset K$ implies
\begin{equation}\label{eq:mon-d-gen}
 \E | \co{Y_1, \dots, Y_n} | \leq  \E | \co{X_1, \dots, X_n}|?  
\end{equation}
Here, $|A|$ denotes the $d$-dimensional Lebesgue measure of the $d$-dimensional set $A$.
The starting point for these investigations should be a check for the first nontrivial case 
$n=d+1$ where the sample range is the random simplex spanned by the sample points. In this form, the 
question was first raised by Meckes \cite{meck} in the context of high-dimensional convex 
geometry.

As already mentioned, in dimension one this is immediate. It was proved by Rademacher \cite{rad} 
in 2012 that this is also true in dimension two. Our main result solves the three-dimensional case.
\begin{theorem}\label{Th1}
In $\R^3$ the expected volume of the sample range is in general {\bf not} monotone in the 
underlying distribution. There are three-dimensional  convex sets $L \subset K$ such that 
\begin{equation}
 \E | \co{Y_1, \dots, Y_{4}} | >  \E | \co{X_1, \dots, X_{4}}|,
\end{equation}
if $Y_1, \dots, Y_{4}$ are chosen uniformly in $L$ and $X_1, \dots, X_{4}$ in $K$.
\end{theorem}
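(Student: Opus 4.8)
The plan is to fix a tetrahedron $T$ (any one will do, by affine invariance) as the smaller body $L$ and to realize $K$ as an infinitesimal outward bump of $T$ over a single facet $F$. Concretely, let $n$ be the outer unit normal of $F$, let $c_F$ be its center, and put $K_\epsilon = \co{T \cup \{c_F + \epsilon\, n\}}$, so that $K_0 = T$ and $K_\epsilon \supset T$ for $\epsilon > 0$, the added region being a thin pyramid over $F$. Writing $V(\epsilon) = \E\,|\co{X_1,\dots,X_4}|$ for four independent uniform points in $K_\epsilon$, it suffices to prove $V'(0) < 0$: then for small $\epsilon > 0$ the bodies $L = T \subset K = K_\epsilon$ satisfy $\E\,|\co{Y_1,\dots,Y_4}| = V(0) > V(\epsilon) = \E\,|\co{X_1,\dots,X_4}|$, which is exactly the claimed failure of monotonicity. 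A genuinely non-affine perturbation is forced here, since along any affine family the ratio $V(\epsilon)/|K_\epsilon|$ is constant and $V$ would grow with the body; bumping one facet breaks this.

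First I would differentiate $V(\epsilon) = |K_\epsilon|^{-4}\,N(\epsilon)$, where $N(\epsilon) = \int_{K_\epsilon^4} |\co{x_1,\dots,x_4}|\,dx$, by the quotient rule:
\begin{equation*}
 V'(0) = \frac{N'(0)}{|T|^4} - \frac{4\,|T|'}{|T|}\, V(0), \qquad |T|' = \tfrac{d}{d\epsilon}|K_\epsilon|\big|_{\epsilon=0} = \tfrac13\,|F| .
\end{equation*}
The key point is that $N'(0)$ localizes to the perturbed facet: at first order only configurations with exactly one point in the cap $K_\epsilon \setminus T$ and the other three in $T$ contribute, and as $\epsilon \to 0$ the cap flattens onto $F$. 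Carrying out this localization collapses the inequality $V'(0) < 0$ into the single clean comparison
\begin{equation*}
 \E\,|\co{Y, X_1, X_2, X_3}| \;<\; \E\,|\co{X_1,X_2,X_3,X_4}| ,
\end{equation*}
where $X_1,\dots,X_4$ are uniform in $T$ and $Y$ is the limiting (tent-shaped) distribution of the cap on $F$. In other words, the bump lowers the expected volume precisely when a fourth point pinned to the facet spans, on average, a smaller simplex than a fourth interior uniform point; and the cleanest representative of that facet point is its center $c_F$.

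The second step is to evaluate both sides explicitly. The right-hand side is the expected volume of a random simplex spanned by four uniform points in a tetrahedron, a classical constant (and in any case obtainable by the same method). The left-hand side is the new ingredient, and this is where the method of moments enters: I would first compute $\E\,|\co{c_F, X_1,X_2,X_3}|^{2k}$ for every $k$ as an elementary polynomial integral over $T^3$ — the even powers dissolve the absolute value, so each moment is rational — which yields the explicit even-moment formula announced as the side result. The same base-times-height / Blaschke--Petkantschin reduction then delivers the first absolute moment needed for the comparison, both for the fixed vertex $c_F$ and for the tent average over $F$.

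The main obstacle is precisely this final numerical comparison. The two expected volumes are of the same order and close in value, so no crude estimate will separate them; the difference must be pinned down exactly, and the sign of the small quantity $V'(0)$ controlled through exact constants. The analytic difficulty is the absolute value of the determinant in the first moment, which forbids a naive polynomial integration and forces either a careful split into sign regions or the Blaschke--Petkantschin change of variables. Getting this sign right — confirming that the bumped facet genuinely decreases the expected volume — is the crux on which the counterexample rests.
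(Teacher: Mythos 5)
Your overall strategy --- perturb the tetrahedron at a facet and reduce the failure of monotonicity to comparing a simplex with one vertex pinned to the facet against a fully uniform simplex --- matches the paper's outline, but there are two genuine gaps. First, the localization step is wrong as stated. For your perturbation $K_\epsilon = \co{T\cup\{c_F+\epsilon n\}}$ the cap is a pyramid over the \emph{whole} facet $F$, so, as you yourself note, the limiting distribution of the cap point is the tent density on $F$, not the point mass at $c_F$; your condition $V'(0)<0$ is therefore equivalent to
$\E_{Y\sim\mathrm{tent}}\,\E\,|\co{Y,X_1,X_2,X_3}| < \E\,|\co{X_1,\dots,X_4}|$.
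Replacing the tent distribution by ``its cleanest representative $c_F$'' is not a harmless simplification: the map $y\mapsto \E\,|\co{y,X_1,X_2,X_3}|$ is convex (the volume equals $\tfrac13\cdot\text{area}\cdot\mathrm{dist}(y,\mathrm{aff}(X_1,X_2,X_3))$, and distance to a plane is convex in $y$), and the barycenter of the tent distribution is exactly $c_F$, so by Jensen the tent average is \emph{at least} the value at $c_F$. Hence the inequality at $c_F$ does not imply the inequality your perturbation actually requires --- and since the final margin is only about $2\cdot 10^{-5}$, the discrepancy cannot be waved away. This is precisely why the paper does not argue with a facet bump directly but invokes Rademacher's lemma: for a polytope, \emph{any} convex enlargement beyond the plane of $F$ necessarily contains a pyramid over all of $F$, so no enlargement of $T$ localizes at $c_F$; the lemma (whose proof handles this localization issue) converts the single pointwise strict inequality at $(T,c)$ into a counterexample pair, which need not be $(T,K_\epsilon)$ itself.

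Second, your evaluation step assumes the crux away. You plan to compute the first absolute moment $\E\,|\co{c_F,X_1,X_2,X_3}|$ exactly, via a ``base-times-height / Blaschke--Petkantschin reduction''. The paper states explicitly that computing this first moment turned out to be even harder than Klee's problem (the evaluation of $\E\,|\co{X_1,\dots,X_4}|$ for the tetrahedron, itself the subject of a long paper) and that the authors' attempts failed; nothing in your proposal indicates how you would succeed there. The paper's key idea, absent from your proposal, is to avoid the first moment altogether: compute all \emph{even} moments exactly (these are rational, since the absolute value disappears), construct by Hermite interpolation an even polynomial $P_{\rm cert}$ with $P_{\rm cert}(x)\ge|x|$ on $[-1/3,1/3]$, and bound
$\E\,|\co{c_F,X_1,X_2,X_3}| \le \E\,P_{\rm cert}(|\co{c_F,X_1,X_2,X_3}|) = \sum_i a_i\,\E\,|\co{c_F,X_1,X_2,X_3}|^{2i} = 0.0173791\ldots < \tfrac{13}{720}-\tfrac{\pi^2}{15015} = 0.0173983\ldots$
Without this one-sided polynomial certificate (or an exact first-moment computation you have not supplied), the numerical comparison at the heart of your argument remains unproven.
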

 
That the general question \eqref{eq:mon-d-gen} cannot be answered in the positive was already shown 
by Rademacher who, in a groundbreaking paper, gave counterexamples for dimensions $d \geq 4$ and 
$n=d+1$. 
It remains an open problem whether there is a number $N$, maybe depending on $K$ or only on the 
dimension of the underlying space, such that monotonicity holds for $n \geq N$. And a suitable 
precise formulation of the question for non-uniform measures would also be highly interesting.

\bigskip
For our proof, we need to construct a pair of convex sets leading to a counterexample. A serious 
drawback of this approach is that one is forced to compute the expected volume of a 
random simplex which is known to be a notorious hard problem. In dimension two, 
tedious but explicit computations from the nineteenth century yielded several 
explicit results, but starting with dimension three, the problem turns out to be out of reach in 
general. The only three-dimensional convex sets where the expected volume of a 
random simplex is known are the ball \cite{miles}, the cube \cite{zinani} and the 
tetrahedron \cite{bucRei}. And in higher dimensions only the ball allows for explicit results.
Since numerical computations in dimension three suggest that in the neighbourhood of the cube and 
the ball the expected volume of a random simplex is monotone, the only potentially tractable 
counterexample could be the tetrahedron and a set close to it, which also is in accordance with 
numerical computations by Rademacher\cite{rad}, and Reichenwallner and Reitzner\cite{self}.

Already the determination of the expected volume of a random simplex in a tetrahedron $T \subset \R^3$ was 
extremely hard. This question is known as Klee's problem, and after many attemps, erroneous 
conjectures and numerical estimates, Reitzner and Buchta \cite{bucRei} proved in a long paper that for uniform random points $X_1,\ldots,X_4$ in a tetrahedron of volume one, we have
\begin{equation}\label{eq:bucrei}
\E | \co{X_1, \dots, X_{4}} |    = \frac{13}{720}- \frac{\pi^2}{15\,015} = 0.01739\ldots 
\end{equation}       
                                                          
It seems to be out of reach to compute this expectation for any other three-dimensional convex set 
close to $T$. Luckily there is a wonderful alternative approach due to Rademacher, using an 
infinitesimal variation of convex sets, which is stated in the following Lemma. 

\begin{lemma}[Rademacher~\cite{rad}]\label{equ}
	For $d \in \N$, monotonicity under inclusion of the map 
	$$K \mapsto \E | \co{X_1, \dots, X_{d+1}} |  ,$$ 
	where $K$ ranges over all $d$-dimensional convex bodies and $X_i$ are iid uniform points 
in $K$, holds if and only if we have for each convex body $K \subseteq \mathbb{R}^d$ and for each 
$z \in \bd K$ that 
	$$ \E | \co{X_1, \dots, X_{d+1}} |   \leq \E | \co{X_1, \dots, X_{d},z} |  .$$
\end{lemma}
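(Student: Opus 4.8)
The plan is to prove both implications of the equivalence through a single mechanism: a first-order expansion of the functional $f(K):=\E|\co{X_1,\dots,X_{d+1}}|$ when a small amount of uniform mass is added to $K$ near a boundary point. Write $g(K,z):=\E|\co{X_1,\dots,X_{d},z}|$ for the quantity on the right-hand side, where $X_1,\dots,X_d$ are iid uniform in $K$ and $z$ is fixed. The crucial observation is that if $K_\varepsilon\supseteq K$ is a convex body with $|K_\varepsilon|=(1+\varepsilon)|K|$ whose added region $K_\varepsilon\setminus K$ shrinks to a single boundary point $z$ as $\varepsilon\to 0$, then the uniform law on $K_\varepsilon$ is the mixture $\tfrac{1}{1+\varepsilon}\mu_K+\tfrac{\varepsilon}{1+\varepsilon}\nu_\varepsilon$, where $\mu_K$ is uniform on $K$ and $\nu_\varepsilon\to\delta_z$ weakly. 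Conditioning on which of the $d+1$ sample points land in the added region and keeping only terms linear in $\varepsilon$ (the event that two or more points land there has probability $O(\varepsilon^2)$) yields
\[
 f(K_\varepsilon)=f(K)\bigl(1-(d+1)\varepsilon\bigr)+(d+1)\,\varepsilon\, g(K,z)+o(\varepsilon),
\]
so that $f(K_\varepsilon)-f(K)=(d+1)\,\varepsilon\,\bigl(g(K,z)-f(K)\bigr)+o(\varepsilon)$.

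For the forward direction I would assume monotonicity and, for a fixed body $K$ and boundary point $z$, take $K_\varepsilon:=\co{K\cup\{p_\varepsilon\}}$ with $p_\varepsilon$ an exterior point tending to $z$; since $z\in\bd K$ this genuinely enlarges $K$ and the added cap concentrates at $z$, giving $\nu_\varepsilon\to\delta_z$. Monotonicity forces $f(K_\varepsilon)\ge f(K)$, hence by the expansion $(d+1)\varepsilon\bigl(g(K,z)-f(K)\bigr)+o(\varepsilon)\ge 0$; dividing by $\varepsilon>0$ and letting $\varepsilon\to 0$ produces exactly $f(K)\le g(K,z)$.

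For the reverse direction I would assume $f(K)\le g(K,z)$ for all $K$ and all $z\in\bd K$, and connect two bodies $L\subseteq K$ by the increasing Minkowski family $K_t:=(1-t)L\oplus tK$, $t\in[0,1]$, which satisfies $K_0=L$, $K_1=K$ and $K_s\subseteq K_t$ for $s\le t$ (because $h_{K_t}=(1-t)h_L+th_K$ is nondecreasing in $t$ when $h_L\le h_K$). Applying the same expansion to the infinitesimal growth from $K_t$ to $K_{t+dt}$, whose added shell concentrates on $\bd K_t$ with some limiting probability measure $\tilde\nu_t$, gives, writing $m(t):=|K_t|$,
\[
 \frac{d}{dt}f(K_t)=(d+1)\,\frac{m'(t)}{m(t)}\left(\int_{\bd K_t} g(K_t,z)\,d\tilde\nu_t(z)-f(K_t)\right).
\]
Since the family is growing, $m'(t)\ge 0$, and since $g(K_t,z)\ge f(K_t)$ pointwise on $\bd K_t$ the integral dominates $f(K_t)$; hence $\tfrac{d}{dt}f(K_t)\ge 0$ and $f(L)=f(K_0)\le f(K_1)=f(K)$. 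Note that the explicit form of $\tilde\nu_t$ is never needed, only that it is a probability measure supported on the boundary.

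The main obstacle will be making the infinitesimal expansion rigorous rather than formal: one must control the $o(\varepsilon)$ and $o(dt)$ remainders, justify the weak convergence $\nu_\varepsilon\to\delta_z$ and the concentration of the growing shell on $\bd K_t$ via the boundedness and continuity of the map $y\mapsto|\co{x_1,\dots,x_d,y}|$, and verify that $t\mapsto m(t)$ is absolutely continuous so that the derivative formula holds for a.e.\ $t$ and integrates back to the desired inequality. These are the places where care with the geometry of the added caps and shells is required; the algebraic heart of the argument, the linear coefficient $(d+1)\bigl(g(K,z)-f(K)\bigr)$, is elementary.
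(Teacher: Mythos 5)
The paper itself gives no proof of this lemma (it is quoted from Rademacher), so your argument has to stand on its own; its reverse implication essentially does, but the forward implication has a genuine gap, and it sits exactly where this paper needs the lemma. You claim that for $K_\varepsilon=\co{K\cup\{p_\varepsilon\}}$ with an exterior point $p_\varepsilon\to z$, the normalized uniform measure $\nu_\varepsilon$ on the cap $K_\varepsilon\setminus K$ converges to $\delta_z$. This is false whenever $z$ is not a point of strict convexity of $\bd K$, in particular for every polytope. Take $K=[0,1]^2$, $z=(1/2,1)$, $p_\varepsilon=(1/2,1+\varepsilon)$: the cap is the triangle with vertices $(0,1)$, $(1,1)$, $p_\varepsilon$, whose diameter stays at least $1$, and $\nu_\varepsilon$ converges to a ``tent''-shaped measure spread over the whole top edge, not to $\delta_z$. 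Consequently your expansion only yields the averaged inequality $\int \E|\co{X_1,\dots,X_d,w}|\,d\nu(w)\ge \E|\co{X_1,\dots,X_{d+1}}|$ for the limiting cap measure $\nu$, which does not imply the pointwise inequality at $z$: the average over the facet can exceed $\E|\co{X_1,\dots,X_{d+1}}|$ while $\E|\co{X_1,\dots,X_d,\cdot}|$ dips below it at the single point $z$. This is not a removable technicality here, because the paper applies precisely this direction (in contrapositive form) with $K$ a tetrahedron and $z=c$ the centroid of a flat facet --- the exact configuration in which your concentration claim fails.

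The standard repair is a two-step localization: first run your cap argument for smooth, strictly convex bodies, where the region of $\bd K$ visible from $p_\varepsilon$ does shrink to $\{z\}$ and hence $\nu_\varepsilon\to\delta_z$; then extend to arbitrary $K$ and $z\in\bd K$ by approximating $K$ in the Hausdorff metric by strictly convex bodies $K_j$ with boundary points $z_j\to z$, using joint continuity of $(K,z)\mapsto \E|\co{X_1,\dots,X_d,z}|$ and continuity of $K\mapsto \E|\co{X_1,\dots,X_{d+1}}|$. Your reverse implication, by contrast, is structurally fine precisely because it never needs concentration: since the hypothesis holds at \emph{every} boundary point of every intermediate body $K_t$, continuity of $y\mapsto \E|\co{X_1,\dots,X_d,y}|$ together with the fact that the shell $K_{t+\delta}\setminus K_t$ lies within distance $O(\delta)$ of $\bd K_t$ already gives $f(K_{t+\delta})-f(K_t)\ge o(\delta)$, and a continuous function with nonnegative lower right Dini derivative is nondecreasing; also $m(t)=|(1-t)L+tK|$ is a polynomial in $t$ by the mixed-volume expansion, so the regularity you worry about is automatic. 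In short: keep the reverse direction, but the forward direction needs the strictly convex approximation step (or an equivalent device) before the lemma --- and with it Theorem 1 --- is actually established.
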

Hence we get the counterexample for Theorem~\ref{Th1} if we succeed in computing the expectation $ \E | \co{X_1, \dots, X_{3},z} |$ for some $z 
\in \bd T$. Because of symmetry, a suitable 
choice for $z$ should be the center $c$ of one of the facets. Yet after several attempts, we observed 
that computing $ \E | \co{X_1, \dots, X_{3},c} |$ is even more difficult than \eqref{eq:bucrei} and 
hence impossible.
Neverless we will prove the following proposition.
\begin{prop}\label{prop:main}
For a tetrahedron $T$ of volume one, $c$ the centroid of a facet of $T$ and $X_1,\ldots,X_3$ uniform random points in $T$, we have that 
	$$ \E | \co{X_1, \dots, X_{3},c} |  < \frac{13}{720}- \frac{\pi^2}{15\,015}  =
	\E | \co{X_1, \dots, X_{4}} |    .$$
\end{prop}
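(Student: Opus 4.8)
The plan is to reduce Proposition~\ref{prop:main} to a quantitative estimate for the single number $\E V$, where $V := |\co{X_1,X_2,X_3,c}|$, and to obtain that estimate from an explicit formula for the \emph{even} moments of $V$. Note first that the right-hand side is exactly the average $\int_T f(z)\,dz$ of $f(z) := \E|\co{X_1,X_2,X_3,z}|$ over $T$, since conditioning on $X_4=z$ and using $|T|=1$ gives $\E|\co{X_1,\dots,X_4}| = \int_T f(z)\,dz$. For fixed $X_1,X_2,X_3$ the map $z\mapsto |\co{X_1,X_2,X_3,z}|$ is a positive constant times the distance of $z$ to the plane through the three points, i.e.\ the absolute value of an affine function, hence convex; thus $f$ is convex. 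This already shows that no soft argument comparing the value of $f$ at a facet centroid with its mean (convexity plus the $S_4$-symmetry of the tetrahedron only relate both to the value of $f$ at the centroid of $T$) can settle the sign, so a genuine computation is unavoidable. By affine invariance of volume ratios and of centroids, I would place $T$ as the standard simplex $S=\{x,y,z\ge 0,\ x+y+z\le 1\}$ with $c$ the centroid of a fixed facet, carry out all computations there, and rescale volumes at the end to enforce $|T|=1$. Writing $6V=|\det(X_1-c,\,X_2-c,\,X_3-c)|=:|D|$ sets up the calculation.

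First I would compute all even moments $m_k := \E V^{2k}$. The crucial point is that the even power removes the absolute value: $V^{2k} = 6^{-2k}D^{2k}$ is an honest polynomial in the nine coordinates of $X_1,X_2,X_3$, so by independence $m_k$ is the integral of this polynomial against the uniform law on $S$ in each block of three variables. Expanding $D$ into its six permutation terms and raising to the power $2k$ by the multinomial theorem reduces $m_k$ to a finite sum of products of simplex moments $\int_S x^a y^b z^c\,dx = a!\,b!\,c!/(a+b+c+3)!$, each of which is rational. Resumming these into a manageable closed form in $k$ yields the announced explicit formula for every even moment of the volume of the random simplex on three uniform points and one facet centroid. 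In particular every $m_k$ is rational, which is entirely consistent with the target being transcendental: a rational even-moment sequence is compatible with an irrational first moment.

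It remains to pass from the $m_k$ to a \emph{strict} upper bound on the first moment $\E V = \E\sqrt{W}$, where $W := V^2$ is supported on $[0,w_{\max}]$ with $w_{\max}$ an explicit rational (the maximal volume of such a simplex, attained when the three uniform points sit at suitable vertices of $T$). The cleanest route to a rigorous strict inequality is to dominate the square root by a polynomial: choose $p$ with $p(w)\ge\sqrt{w}$ for all $w\in[0,w_{\max}]$, so that $\E V \le \sum_j p_j\,m_j$ is a finite rational combination of known moments, and then take $p$ accurate enough (degree large enough) that this rational number falls strictly below $\tfrac{13}{720}-\tfrac{\pi^2}{15\,015}=0.01739\ldots$. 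Since all the $m_k$ are available in closed form, the bound can be driven arbitrarily close to the true value $\E V$, so any fixed positive gap to the target is certified by finitely many moments. Alternatively one can reconstruct the density $g$ of $V$ from the moment sequence — here I would expect $\arctanh$-type terms to appear upon integrating the relevant radicals — and estimate $\E V=\int v\,g(v)\,dv$ directly.

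The main obstacle is the even-moment computation itself: producing a genuinely closed expression for $m_k$ valid for all $k$, that is, taming the combinatorial explosion in expanding $D^{2k}$ and resumming the resulting simplex moments into a tractable function of $k$. A secondary, quantitative difficulty is that the true value of $\E V$ presumably lies only slightly below the transcendental target, so the polynomial domination above must be sharp — equivalently, enough moments must be carried — to secure the strict inequality rather than merely an approximate one.
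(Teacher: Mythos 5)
Your proposal is correct and takes essentially the same route as the paper: compute the even moments $\E | \co{X_1,\dots,X_3,c} |^{2k}$ as explicit rationals via multinomial expansion and the simplex monomial integrals $\int_{T_o} x^a y^b z^c \, d(x,y,z) = \frac{a!\,b!\,c!}{(a+b+c+3)!}$, then dominate the square root (equivalently, $|x|$ on $[-\tfrac13,\tfrac13]$) from above by a polynomial, so that $\E | \co{X_1,\dots,X_3,c} |$ is bounded by a rational combination of even moments that falls strictly below $\frac{13}{720}-\frac{\pi^2}{15\,015}$. The paper implements exactly this plan, constructing the majorant by Hermite interpolation of $\sqrt{t}$ (its Lemma~\ref{lem:interp}, which yields one-sidedness automatically) at nodes found by a linear program and then rationalized, and it confirms the quantitative tightness you anticipated: twelve even moments are not enough, while thirteen suffice.
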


A combination of this result with Rademacher's Lemma \ref{equ} yields Theorem \ref{Th1}.
The rigorous bound in Proposition \ref{prop:main} is obtained by combining methods from stochastic 
geometry with 
results from approximation theory. In the background, first there is a result about the 
precise approximation of the absolute value function on $[ -\frac 13, \frac 13]$ by suitable 
even polynomials, Lemma \ref{lem:interp}. To apply this in our context, we use an explicit result for 
{\it all} even moments of $| \co{X_1, \dots, X_{3},c} |$ which --- at a first glance maybe 
surprisingly --- is much easier to obtain then just the single first moment.

\begin{theorem}\label{Th2}
Let $k \in \mathbb{N}$ and choose three uniform random points $X_1,\ldots,X_3$ in a tetrahedron of volume one. Then it holds:
\begin{align*}
\E | \co{X_1, \dots, X_{3},c} |^{2k} 
&=   
\frac {8 }{3^{2k-3} }
\sum_{\sum_1^{18}k_i=2k} (-1)^{k^\prime} 3^{k^{\prime\prime}} 
\binom{2k}{k_1,\ldots,k_{18}}\, \prod_{i=1}^3 \frac{l_i!\,m_i!\,n_i!}{(l_i+m_i+n_i+3)!},
\end{align*}
where the range of summation and abbreviations are given in \eqref{eq:def-summ}.
The first five even moments are given at the end of Section~\ref{sec:even}.
\end{theorem}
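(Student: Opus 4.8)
The plan is to turn the volume into a determinant and to exploit that an even power removes the absolute value, which is precisely the feature that makes the even moments accessible where the first moment is not. Taking $c$ as the base vertex of the simplex, I would write
\[
|\co{X_1,X_2,X_3,c}| \;=\; \tfrac16\,\bigl|\det\!\left[\,X_1-c,\;X_2-c,\;X_3-c\,\right]\bigr|,
\]
so that $|\co{X_1,X_2,X_3,c}|^{2k}=6^{-2k}\bigl(\det[X_1-c,X_2-c,X_3-c]\bigr)^{2k}$ is a genuine polynomial in the coordinates of the three points, with no case distinction on the sign of the determinant. This single observation is what separates the present computation from the intractable first moment.

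Next I would fix a convenient model. Place one vertex of the volume-one tetrahedron $T$ at the origin and write each point as $X_i=\sum_{j=1}^{3}\lambda_{ij}w_j$ in barycentric coordinates with respect to the remaining vertices $w_1,w_2,w_3$. Then $(\lambda_{i1},\lambda_{i2},\lambda_{i3})$ is uniform on the standard simplex, the facet centroid is $c=\tfrac13(w_1+w_2+w_3)$, and $X_i-c=\sum_j(\lambda_{ij}-\tfrac13)\,w_j$. Factoring the fixed matrix $[w_1,w_2,w_3]$ out of the determinant cancels the $6^{-2k}$, since its determinant has absolute value $6\vol(T)=6$, and reduces everything to the pure quantity $\det[\lambda_{ij}-\tfrac13]$. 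I would then expand this $3\times3$ determinant by multilinearity: the constant shift $-\tfrac13$ in every entry splits it into a top-degree part and lower-degree parts, and after clearing the common denominator it becomes a fixed signed sum of monomials in the nine coordinates whose integer coefficients are powers of $3$. The exact list of surviving monomials, their signs, and the exponent triples they carry is the data packaged in \eqref{eq:def-summ}.

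With the determinant written as a signed sum of the terms indexed in \eqref{eq:def-summ}, I would raise it to the power $2k$ by the multinomial theorem, producing the coefficient $\binom{2k}{k_1,\dots,k_{18}}$ together with the sign $(-1)^{k'}$ and the power $3^{k''}$ coming from the magnitudes of the coefficients. Taking the expectation and using the independence of $X_1,X_2,X_3$, the expectation of each resulting product of monomials factorizes across the three points. For a single point the required moment is the Dirichlet integral $\E[\lambda_1^{l}\lambda_2^{m}\lambda_3^{n}]=6\cdot\tfrac{l!\,m!\,n!}{(l+m+n+3)!}$ over the standard simplex, where $(l_i,m_i,n_i)$ is read off as the exponent vector accumulated at point $i$ from the chosen multi-index. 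Pulling the common factor $\tfrac13$ out of the determinant before raising to the power $2k$ contributes $3^{-2k}$, while the three simplex normalizations contribute $6^{3}$, so the global constant is $6^{3}/3^{2k}=\tfrac{8}{3^{2k-3}}$, which completes the stated identity.

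The hard part is bookkeeping rather than analysis. One must enumerate the monomials of the shifted determinant precisely, fix every sign and every power of $3$, and — most delicately — express the per-point exponents $(l_i,m_i,n_i)$ as explicit functions of the summation multi-index $(k_1,\dots,k_{18})$, since these are exactly what feed the Dirichlet factors; this is the content of \eqref{eq:def-summ}. The remaining ingredients — the even-power reduction to a polynomial, the factorization of the monomial moment through independence, and the simplex moment formula — are all routine, so the proof hinges on getting the combinatorics of the determinant expansion exactly right.
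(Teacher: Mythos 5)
Your overall strategy is essentially the paper's: kill the absolute value with the even power, expand the resulting determinant polynomial via the multinomial theorem, use independence of the three points to factor the expectation, and evaluate the per-point monomial integrals as Dirichlet (Beta) integrals over the corner simplex. Your normalizations are also all correct ($|\det[w_1,w_2,w_3]|=6$ for a volume-one tetrahedron, and $6^3\cdot 3^{-2k}=8/3^{2k-3}$). The one genuine flaw is your choice of facet. You take $c=\frac13(w_1+w_2+w_3)$, the centroid of the facet \emph{opposite} the base vertex, so that in your coordinates \emph{every} entry of the matrix $[\lambda_{ij}-\frac13]$ is shifted. Writing $J$ for the all-ones matrix and using the rank-one update formula, $\det(M-\tfrac13 J)=\det M-\tfrac13\sum_{i,j}C_{ij}$, where the $C_{ij}$ are the nine cofactors of $M$; these contribute $18$ pairwise distinct quadratic monomials (no cancellation occurs), on top of the $6$ cubic monomials of $\det M$. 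So your expansion has $24$ terms, your multinomial sum runs over $24$ indices, and your assertion that the signs, powers of $3$, and exponent maps are ``the data packaged in \eqref{eq:def-summ}'' is false: \eqref{eq:def-summ} indexes exactly $18$ terms.

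The paper avoids this by choosing the centroid of a facet \emph{containing} the base vertex: with $T_o=\{x,y,z\ge 0,\ x+y+z\le 1\}$ it takes $c_o=(\tfrac13,\tfrac13,0)$, so the third column of the shifted matrix carries no shift and the expansion collapses to $12$ quadratic plus $6$ cubic $=18$ monomials --- precisely those indexed by $k_1,\dots,k_{18}$, with signs $(-1)^{k'}$ and coefficients $3^{k''}$ as in \eqref{eq:def-summ}. Your argument, carried out faithfully, proves a correct and numerically equal identity (the two facets are exchanged by a volume-preserving affine symmetry of the tetrahedron, so the moments agree), but it is a structurally different $24$-index formula, not the stated theorem. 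The fix is one line: run your computation with the facet through the origin, i.e.\ with a $c$ having a vanishing coordinate in your frame; then your bookkeeping reproduces \eqref{eq:def-summ} and the rest of your proof goes through exactly as the paper's does.
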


\bigskip
This paper is organized in the following way. In Section~\ref{sec:even}, we give a series representation for even moments of the volume of a random tetrahedron inside a tetrahedron where one point is fixed to be the centroid of a facet, and we use that to find an exact value for the first thirteen even moments. In Section~\ref{sec:proof}, we compute an upper bound for the expected volume of our random tetrahedron, which is a rational affine combination of those even moments. This upper bound suffices to show that the tetrahedron is a counterexample.

As a general reference for results on random polytopes, we refer to the book on Stochastic and Integral Geometry by Schneider and Weil \cite{schWeil}. More recent surveys are due to Hug \cite{hug_surv} and Reitzner \cite{reitz_surv}.

\section{Even Moments of the Volume of Random Simplices}\label{sec:even}

Let $T$ be a tetrahedron of volume one and $c = (x_c,y_c,z_c)$ the centroid of one of its facets. 
For random points $X_1,X_2, X_3 \in T$, we write $X_i = (x_i,y_i,z_i), i = 1,2,3$.
The volume of the simplex with vertices $X_1,X_2, X_3$ and $c$ is given by
\begin{align*}
| \co{X_1, \dots, X_{3},c} |   &=  \left|\frac{1}{6}\det\begin{pmatrix}
x_1 & y_1 & z_1 & 1\\
x_2 & y_2 & z_2 & 1\\
x_3 & y_3 & z_3 & 1\\
x_c & y_c & z_c & 1
\end{pmatrix}\right|
= 6^{-1} | D(x_1, \dots , z_c)| ,
\end{align*}
and hence by the absolute value of a polynomial $D$ of degree precisely three in the coordinates of 
$X_1,X_2,X_3$ and $c$. We are interested in the even moments of $ | \co{X_1, \dots, X_{3},c} |$, 
where we get rid of the absolute value.
\begin{align*}
\E | \co{X_1, \dots, X_{3},c} |^{2k} 
&= 
6^{-2k} \int\limits_T \int\limits_T \int\limits_T \ D(x_1, \dots, z_c)^{2k} \ 
\\ & \hskip3.5cm 
d(x_1,y_1,z_1)\, d(x_2,y_2,z_2)\, d(x_3,y_3,z_3).  
\end{align*}

Let $T_o$ be the specific tetrahedron 
$$ T_o = \{(x,y,z)\in \mathbb{R}^3: x,y,z \geq 0, x+y+z\leq 
1 \},$$ 
i.e., that with vertices $(0,0,0), (1,0,0)$, $(0,1,0)$ and 
$(0,0,1)$. Note that the volume of $T_o$ is $ 1/6$. We choose $c_o=(1/3,1/3,0)$, the centroid 
of the facet $\{(x,y,0)\in\mathbb{R}^3: x,y\geq 0, x+y \leq 1\}$. 

Since the expectation $\E | \co{X_1, \dots, X_{3},c} |$ is invariant under volume-preserving affine transformations, we 
can use as a representative of a tetrahedron of volume one the tetrahedron ${\sqrt[3] 6}\, T_o $ and 
the center ${\sqrt[3]6}\, c_0$. We have:
\begin{align}\label{eq:intpol}
&\E | \co{X_1, \dots, X_{3},c} |^{2k} \nonumber\\
&\hspace{2cm}=  \nonumber
6^{-2k} 
\int\limits_{{\sqrt[3] 6} T_o} \int\limits_{{\sqrt[3] 6} T_o} \int\limits_{{\sqrt[3] 6} T_o} 
\ D(x_1, \dots, z_{{\sqrt[3]6 }c_0})^{2k} \  
\\ & \nonumber \hskip6.7cm  
d(x_1,y_1,z_1)\,d(x_2,y_2,z_2)\,d(x_3,y_3,z_3)  
\\[1ex] &\hspace{2cm}= 
6^{3} 
\int\limits_{T_o} \int\limits_{T_o} \int\limits_{T_o} 
D(x_1, \dots, z_{c_o})^{2k} \  
d(x_1,y_1,z_1)\,d(x_2,y_2,z_2)\,d(x_3,y_3,z_3).  
\end{align}
Expanding the determinant, the polynomial $D$ can be written as
\begin{align*}
D(x_1, \dots, z_{c_o})
=&
\frac{1}{3} \Big(x_1 z_2 - x_1 z_3 - x_2 z_1 + x_2 z_3 + x_3 z_1 - x_3 z_2 - y_1 z_2 + y_1 z_3 
\\ &\hspace{1cm}
+ y_2 z_1 - y_2 z_3 - y_3 z_1 + y_3 z_2 + 3x_1 y_2 z_3 - 3 x_1 y_3 z_2 
\\ &\hspace{1cm}
- 3x_2 y_1 z_3 + 3x_2 y_3 z_1 + 3x_3 y_1 z_2 - 3x_3 y_2 z_1 \Big).
\end{align*}
By the Multinomial Theorem, and using the multinomial 
coefficient
$$\binom{2k}{k_1,\ldots,k_{18}} = \frac{(2k)!}{k_1! \cdots k_{18}!} , $$
the $(2k)$-th power of it can be rewritten as 
\begin{align}\label{eq:pol2k}
D(x_1, \dots, z_{c_o})^{2k}
=& \nonumber
3^{-2k} \sum_{\sum_1^{18} k_i=2k} (-1)^{k^\prime} 
3^{k^{\prime\prime}} \binom{2k}{k_1,\ldots,k_{18}} (x_1 z_2)^{k_1}\, (x_1 z_3)^{k_2}\, (x_2 z_1)^{k_3}
\\ &  \nonumber
\hspace{1cm}\times (x_2 z_3)^{k_4} \, (x_3 z_1)^{k_5}\, (x_3 z_2)^{k_6}\, (y_1 z_2)^{k_7}\, (y_1 z_3)^{k_8}\, (y_2 z_1)^{k_9} \,
\\ &  \nonumber
\hspace{1cm}\times (y_2 z_3)^{k_{10}}\, (y_3 z_1)^{k_{11}}\, (y_3 z_2)^{k_{12}}\, (x_1 y_2 z_3)^{k_{13}}\, (x_1 y_3 z_2)^{k_{14}}\, 
\\ &  \nonumber 
\hspace{1cm} \times  (x_2 y_1 z_3)^{k_{15}} \,
(x_2 y_3 z_1)^{k_{16}}\, (x_3 y_1 z_2)^{k_{17}}\, (x_3 y_2 z_1)^{k_{18}}
\\ = &
3^{-2k} \sum_{\sum_1^{18}k_i=2k} (-1)^{k^\prime} 3^{k^{\prime\prime}} 
\binom{2k}{k_1,\ldots,k_{18}}\, \prod_{i=1}^3 x_i^{l_i} \, y_i^{m_i} \, z_i^{n_i} \ .
\end{align}
Here for abbreviation we use the following notation:
\begin{align}\label{eq:def-summ}
 \nonumber
k^\prime &=k_2+k_3+k_6+k_7+k_{10}+k_{11}+k_{14}+k_{15}+k_{18}, \\ \nonumber
k^{\prime\prime} &= k_{13}+k_{14}+k_{15}+k_{16}+k_{17}+k_{18}, \\ \nonumber
l_1 &= k_1+k_2+k_{13}+k_{14},\\ \nonumber
m_1 &= k_7+k_8+k_{15}+k_{17},\\ \nonumber
n_1 &= k_3+k_5+k_9+k_{11}+k_{16}+k_{18},\\
l_2 &= k_3+k_4+k_{15}+k_{16},\\ \nonumber
m_2 &= k_9+k_{10}+k_{13}+k_{18},\\ \nonumber
n_2 &= k_1+k_6+k_7+k_{12}+k_{14}+k_{17},\\ \nonumber
l_3 &= k_5+k_6+k_{17}+k_{18},\\ \nonumber
m_3 &= k_{11}+k_{12}+k_{14}+k_{16},\\ \nonumber
n_3 &= k_2+k_4+k_8+k_{10}+k_{13}+k_{15}.\\ \nonumber
\end{align}

Integration of the monomials over the tetrahedron $T_o$ gives
\begin{eqnarray*}
\int\limits_{T_o}  x^{l_i}\, y^{m_i}\, z^{n_i}\  d(x,y,z) 
&=&
\underbrace{\int\limits_0^1\int\limits_0^{1} \int\limits_0^{1}}_{x+y+z \leq 1} 
x^{l_i}\, y^{m_i}\, z^{n_i} \  dx\,dy\,dz
\\ 
\left| \begin{array}{ll}
	z=t \\ y=s(1-t) \\ x=r(1-s)(1-t)
	\end{array}
\right|
&=&
\int\limits_0^1 r^{l_i}\; dr \int\limits_0^{1} s^{m_i}\, 
(1-s)^{l_i+1}\,ds \int\limits_0^{1} t^{n_i}\, (1-t)^{l_i+m_i+2}\,dt 
\\ &=&
\frac1{l_i+1}\, B(m_i+1,l_i+2)\, B(n_i+1, l_i+m_i+3)
\\ &=&
\frac{l_i!\,m_i!\,n_i!}{(l_i+m_i+n_i+3)!}  \ .
\end{eqnarray*}

Combining this with equations \eqref{eq:intpol} and \eqref{eq:pol2k} yields
\begin{align*}
\E | \co{X_1, \dots &, X_{3},c} |^{2k} =
\\ &=   
\frac {8 }{3^{2k-3} }
\sum_{\sum_1^{18}k_i=2k} (-1)^{k^\prime} 3^{k^{\prime\prime}} 
\binom{2k}{k_1,\ldots,k_{18}} \prod_{i=1}^3 \frac{l_i!\,m_i!\,n_i!}{(l_i+m_i+n_i+3)!},
\end{align*}
which is Theorem \ref{Th2}.
We list the first five even moments of the volume of a random simplex in a tetrahedron $T$ of 
volume one:
\begin{align*} \label{eq:evenmoments}
\E | \co{X_1, \dots, X_{3},c} |^{2} &= \frac{1}{2\, 000} = 0.0005,\\
\E | \co{X_1, \dots, X_{3},c} |^4 &= \frac{43}{27\,783\, 000} \approx 1.54771\cdot 10^{-6},\\
\E | \co{X_1, \dots, X_{3},c} |^6 &= \frac{347}{28\,805\,414\,400} \approx 1.20463\cdot 10^{-8},\\
\E | \co{X_1, \dots, X_{3},c} |^8 &= \frac{2\,389}{14\,263\,395\,300\,000} \approx 1.67492\cdot 
10^{-10},\\
\E | \co{X_1, \dots, X_{3},c} |^{10} &= \frac{310483}{90\,249\,636\,885\,408\,000} \approx 
3.44027\cdot 10^{-12}.
\end{align*}

\section{Proof of Theorem 1}\label{sec:proof}

As described in Section~\ref{sec:even}, the $(2k)$-th moment of $| \co{X_1, \dots, X_{3},c} |$ can 
be computed, with fast increasing complexity in $k$. Also note that the volume of a tetrahedron in 
$T$, where one vertex is fixed to be the centroid of a facet of $T$, is not larger than $1/3$. 
Hence, we want to approximate the absolute value function in the interval $[-1/3,1/3]$ by a 
polynomial
 $$P(x) = \sum_{i=0}^n a_i x^{2i}$$
for some $n\in\mathbb{N}$ such that $P(x) > |x|$ for all $x \in [-1/3,1/3]$ or, equivalently, 
\linebreak $P(x) > x$ for all $x \in [0,1/3]$. 
In contrast to the classical problem of {\it best approximation} of $|x|$ by polynomials, we are interested in one-sided approximation
and a certain expected value of the polynomial as objective.
We use the following standard result for polynomial interpolation.

\begin{lemma}\label{lem:interp}
 Let $m\in\N$, $n=2m+1$, and $0<x_0<\hdots<x_m$ be given. Then the system of equations $$P(x_j)=x_j 
\text{ and } P'(x_j)=1 \quad \text{ for } j=0,\hdots,m$$ determines uniquely a polynomial 
$P(x)=\sum_{i=0}^n a_i x^{2i}$ with the property $P(x)\ge |x|$ for all $x\in\R$.
\end{lemma}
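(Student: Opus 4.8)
The plan is to remove the evenness constraint and the absolute value simultaneously by the substitution $u = x^2$. Since $P(x)=\sum_{i=0}^n a_i x^{2i}$ is even, there is a unique polynomial $\tilde P(u)=\sum_{i=0}^{2m+1} a_i u^i$ of degree at most $2m+1$ with $P(x)=\tilde P(x^2)$. Writing $u_j = x_j^2$, so that $0<u_0<\dots<u_m$, and using $P'(x)=2x\,\tilde P'(x^2)$, the interpolation conditions $P(x_j)=x_j$ and $P'(x_j)=1$ become
\[
\tilde P(u_j)=\sqrt{u_j},\qquad \tilde P'(u_j)=\frac{1}{2\sqrt{u_j}}=\left.\frac{d}{du}\sqrt{u}\,\right|_{u=u_j}.
\]
Thus $\tilde P$ is precisely the Hermite interpolant of the function $h(u)=\sqrt u$ at the nodes $u_0,\dots,u_m$, matching value and first derivative at each node. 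Moreover the desired inequality $P(x)\ge|x|$ for all $x\in\R$ is equivalent, via $|x|=\sqrt{x^2}$, to $\tilde P(u)\ge\sqrt u$ for all $u\ge 0$.

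Existence and uniqueness then follow immediately: a Hermite interpolation problem prescribing value and derivative at $m+1$ distinct nodes has a unique solution of degree at most $2m+1$, and the correspondence $P\leftrightarrow\tilde P$ is a bijection between even polynomials of the stated form and polynomials of degree at most $2m+1$. This settles the uniqueness assertion of the lemma.

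For the inequality I would invoke the standard Hermite remainder formula. Since $h(u)=\sqrt u$ is $C^\infty$ on $(0,\infty)$, for each $u>0$ there is a $\xi>0$ lying in the smallest interval containing $u$ and the nodes such that
\[
\sqrt u-\tilde P(u)=\frac{h^{(2m+2)}(\xi)}{(2m+2)!}\,\prod_{j=0}^m (u-u_j)^2 .
\]
The product is nonnegative, and $h^{(k)}(u)=\binom{1/2}{k}\,k!\,u^{1/2-k}$ has sign $(-1)^{k-1}$ on $(0,\infty)$; for the even order $k=2m+2$ this is negative. Hence the right-hand side is $\le 0$, giving $\tilde P(u)\ge\sqrt u$ for every $u>0$, while $\tilde P(0)\ge 0$ follows by continuity. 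Translating back, $P(x)=\tilde P(x^2)\ge\sqrt{x^2}=|x|$ for all $x$.

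The one genuine obstacle is the non-smoothness of $|x|$ at the origin: the remainder formula for Hermite interpolation cannot be applied on any interval whose interior contains $0$, and the symmetric nodes $\pm x_j$ always straddle $0$. The substitution $u=x^2$ is exactly what dissolves this difficulty, since it transports the singularity to the endpoint $u=0$, where $h=\sqrt u$ remains continuous and the error estimate holds on every compact subinterval of $(0,\infty)$, the value at $u=0$ being recovered by a harmless limit. Everything else is the textbook theory of Hermite interpolation together with the elementary sign pattern of the derivatives of the square root.
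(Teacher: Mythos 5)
Your proof is correct and follows essentially the same route as the paper: the substitution $t=x^2$ turns the problem into Hermite interpolation of $\sqrt{t}$, and the one-sided bound follows from the Hermite remainder formula together with the negativity of the even-order derivative of $\sqrt{t}$. Your treatment is in fact slightly more careful than the paper's, as you handle the point $u=0$ by continuity and let $\xi$ range over the smallest interval containing $u$ and the nodes rather than just $[t_0,t_m]$.
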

\begin{proof}
 Let $t_j=x_j^2$ and consider the standard Hermite interpolation problem 
$$Q(t_j)=f(t_j) \text{ and } Q'(t_j)=f'(t_j) \quad \text{for } j=0,\hdots,m$$ for the functions 
$f(t)=\sqrt{t}$ and $Q(t)=\sum_{i=0}^n a_i t^i$. The condition $P'(x_j)=1$ is equivalent to $Q'(t_j)= 1/(2\sqrt{t_j}) =f'(t_j)$. Then the interpolation error fulfills,
 for some $\xi\in [t_0,t_m]$, the estimate
 \begin{equation*}
  f(t)-Q(t)=\frac{f^{(n+1)}(\xi)}{(n+1)!}\prod_{j=0}^m (t-t_j)^2 < 0,
 \end{equation*}
 where the last inequality follows from $f^{(n+1)}(t)<0$ for all $t$.
\end{proof}

We note in passing that for even $n=2m$ and $0<x_0<\hdots<x_m=1/3$, we only require the simple 
interpolation condition $P(x_m)=x_m$ in the last point and get $P(x)\ge |x|$ for all $x\in 
[-x_m,x_m]$.

\bigskip
Our aim is to approximate $\E | \co{X_1, \dots, X_{3},c} |$ from above by an even polynomial $P$ of 
degree $2n$,
\begin{eqnarray*}
\E | \co{X_1, \dots, X_{3},c} |
\leq 
\E P(| \co{X_1, \dots, X_{3},c} |),
\end{eqnarray*}
which holds if $|x| \leq P(x) $ on $[- \frac 13, \frac 13]$.
Moreover, the best polynomial for fixed $n\in\N$ can be found via the linear optimization problem
\begin{align*}
\min_P \ \E P(| \co{X_1, \dots, X_{3},c} |) &=
\min_{a_i}\  \sum_{i=0}^n a_{i}\, \E P(| \co{X_1, \dots, X_{3},c} |)^{2i}  
\\ & \hspace{3cm} 
\text{s.t.} \ P(x)\ge x, \; x\in \left[0,\frac{1}{3}\right].
\end{align*}
Please note that the constraint is infinite dimensional. Relaxing the constraint, we get a lower 
bound on $\E P(| \co{X_1, \dots, X_{3},c} |)$ via the finite dimensional linear program
\begin{equation*}
 \min_P \E P(| \co{X_1, \dots, X_{3},c} |) \quad \text{s.t.} \  P(x_{\ell})\ge x_{\ell}, \; 
x_{\ell}\in \left[0,\frac{1}{3}\right],\;\ell=0,\hdots,L.
\end{equation*}
For $n=12$ and $L=100$ equidistant points $x_{\ell}\in[0,1/3]$, we numerically compute via Matlab 
and the optimization toolbox CVX \cite{cvx} 
$$ \E P(| \co{X_1, \dots, X_{3},c} |) >0.01746,$$
yielding that we do not get a sufficiently precise estimate using only $n=12$ even moments.

For $n=13$ and $L=1\,000$, we solved the above linear program, computed the interpolation nodes with the absolute value function numerically, and rationalized these points to 
\begin{equation*}
 \{x_j:j=0,\hdots,6=m\} = \left\{\frac{1}{83}, \frac{1}{22}, \frac{1}{11}, \frac{2}{15}, \frac{2}{11}, \frac{5}{22}, \frac{4}{15}\right\}.
\end{equation*}
Using these points for the interpolation problem in Lemma \ref{lem:interp} gives an even 
polynomial $P_{\rm cert}(x)= \sum_{i=0}^{13} a_i x^{2i} $ of degree 26 with explicitly given 
rational coefficients $a_0, \dots, a_{13}$ and the property $|x| \leq P_{\rm cert}(x)$. 
Finally, we use the even moments computed in Section \ref{sec:even} to complete the proof of 
Theorem~\ref{prop:main}:

\begin{align*}
\E | \co{X_1, \dots, X_{3},c} |
&\le 
\E P_{\rm cert}(| \co{X_1, \dots, X_{3},c} |)=
\sum_{i=0}^{13} a_i \E | \co{X_1, \dots, X_{3},c} |^{2i}
\\  &=
\underbrace{
\scalebox{0.52}{$  
\frac{
9215716290354844120429638007455369524673678722793816500930077456473045568887048115406728916177539584
714665872679760706490830685597598152285959465854826727651}{
5302749610595656531552535250244971867247891347471652937573234276503331818475900122041632848466907674
94412592136803582459020919668764372661702456688640000000000}$}
}_{= 0.0173791 \dots}
\\  &<
\underbrace{ \frac{13}{720}- \frac{\pi^2}{15\,015} }_{= 0.01739\ldots}
= 
\E | \co{X_1, \dots, X_{4}} |.   
\end{align*}

\bigskip
\parindent=0pt

\begin{samepage}
	Stefan Kunis, Matthias Reitzner\\
	Institut f\"ur Mathematik\\
	Universit\"at  Osnabr\"uck\\
	Albrechtstr. 28a \\
	49076 Osnabr\"uck, Germany\\
	e-mail: stefan.kunis, matthias.reitzner@uni-osnabrueck.de
\end{samepage}

\bigskip
\begin{samepage}
	Benjamin Reichenwallner\\
	Fachbereich Mathematik \\
	Universit\"at Salzburg\\
	Hellbrunnerstra\ss e 34\\
	5020 Salzburg, Austria\\
	e-mail: benjamin.reichenwallner@sbg.ac.at
\end{samepage}


\begin{thebibliography}{1}
	
	\bibitem{bucRei}
	{C.~Buchta and M.~Reitzner}, { The convex hull of random points in a tetrahedron: Solution of Blaschke's problem and more general results.} 
	{\it J. reine angew. Math.} {\bf 536} (2001), 1--29.
	
	\bibitem{cvx}
	{M.~Grant and S.~Boyd}, { {CVX}: Matlab Software for Disciplined Convex Programming, version 2.1.}, {\url{http://cvxr.com/cvx}}, 2014.
	
	\bibitem{hug_surv}
	{D.~Hug}, { Random polytopes.}
	{In: Spodarev, E. (ed.): {\it Stochastic Geometry, Spatial Statistics and Random Fields}. 
		Lecture Notes in Mathematics {\bf 2068}, pp. 205--238, Springer, Heidelberg} 2013.
	
	\bibitem{meck}
	{M.~Meckes}, { Monotonicity of volumes of random simplices.} 
	In: Recent Trends in Convex and Discrete Geometry, 2006.
	
	\bibitem{miles}
	{R.~E. Miles}, { Isotropic random simplices.} {\it Adv. in Appl. Probab.} {\bf 3}
	(1971), 353--382.


	\bibitem{rad}
	{L.~Rademacher}, { On the monotonicity of the expected volume of a
		random simplex.} {\it Mathematika} {\bf 58} (2012), 77--91.
	
	\bibitem{self}
	{B.~Reichenwallner and M.~Reitzner}, { On the monotonicity of the moments of volumes of random simplices.} {\it Mathematika} {\bf 62}
	(2016), 949--958.
	
	\bibitem{reitz_surv}
	{M.~Reitzner}, { Random polytopes.}
	In: Kendall W.S. and Molchanov I. (eds.): {\it New perspectives in stochastic geometry.} pp. 45--76, Oxford Univ. Press, Oxford, 2010. 
	
	\bibitem{schWeil}
	{R.~Schneider and W.~Weil}, { Stochastic and integral geometry.}
	Probability and its Applications (New York), Springer-Verlag, Berlin, 2008.

	\bibitem{zinani}
	{A.~Zinani}, { The Expected Volume of a Tetrahedron whose Vertices are Chosen at Random 
	in the Interior of a Cube.} {\it Monatsh. Math.} {\bf 139}
	(2003), 341--348.

	\end{thebibliography}
\end{document}